\newcommand {\ep} {\varepsilon}
\newcommand {\gm} {\gamma}
\newcommand {\ii} {\infty}
\newcommand {\dt} {\delta}
\newcommand {\al} {\alpha}
\newcommand {\bt} {\beta}
\newcommand {\lb} {\lambda}
\newcommand {\Lb} {\Lambda}
\newcommand {\su} {\subset}
\newcommand {\wt} {\widetilde}
\newcommand {\wh} {\widehat}
\newcommand {\pr} {\prime}
\newcommand {\mc} {\mathcal}
\newcommand {\pp} {\perp}
\newcommand {\mbb} {\mathbb}
\newtheorem{teo}{Theorem}[section]
\newtheorem{pro}{Proposition}[section]
\newtheorem{cor}{Corollary}[section]
\newtheorem{lm}{Lemma}[section]
\theoremstyle{definition}
\newtheorem{rem}{Remark}[section]
\newtheorem{df}{Definition}[section]
\title{Notes on noncommutative ergodic theorems}
\keywords{Semifinite von Neumann algebra; almost uniform convergence; bilaterally almost uniform convergence; completeness; fully symmetric space; individual ergodic theorem}
\subjclass[2010]{47A35(primary), 46L51(secondary)}
\begin{document}
\date{July 30, 2023}

\begin{abstract}
Given a semifinite von Neumann algebra $\mc M$ equipped with a faithful normal semifinite trace $\tau$, we prove that the spaces $L^0(\mc M,\tau)$ and $\mc R_\tau$ are complete with respect to pointwise, almost uniform and bilaterally almost uniform, convergences in $L^0(\mc M,\tau)$. Then we show that the pointwise Cauchy property for a special class of nets of linear operators in the space $L^1(\mc M,\tau)$ can be extended to pointwise convergence of such nets in any fully symmetric space $E\su\mc R_\tau$, in particular, in any space $L^p(\mc M,\tau)$, $1\leq p<\ii$. Some applications of these results in the noncommutative ergodic theory are discussed.
\end{abstract}

\author{Semyon Litvinov}
\address{76 University Drive, Pennsylvania State University, Hazleton 18202}
\email{snl2@psu.edu}

\maketitle

\section{Introduction and Preliminaries}

Let $\mc M$ be a semifinite von Neumann algebra equipped with a faithful normal semifinite trace $\tau$, and let $\mc P(\mc M)$ stand for the complete lattice of projections in $\mc M$; see \cite{br, ta,sz}. If $\mathbf 1$ is the identity of $\mc M$ and $e\in \mc P(\mc M)$, we write $e^{\perp}=\mathbf 1-e$. In what follows, we assume that $\tau(\mathbf 1)=\ii$.
\vskip3pt
Denote by $L^0=L^0(\mc M,\tau)$ the algebra of $\tau$-measurable operators affiliated with $\mc M$. Let $L^p=L^p(\mc M,\tau)$ if $1\leq p<\ii$ and $L^\ii(\mc M,\tau)=\mc M$
be the noncommutative $L^p$-\,spaces associated with $(\mc M,\tau)$. For detailed accounts on the spaces $L^p(\mc M,\tau)$, $p\in \{0\}\cup [1,\ii)$, see \cite{se, ne, ye0, px}.

\vskip5pt
Let $(\Lb,\leq)$ be a directed set, and let $\| \cdot \|_\ii$ be the uniform norm in $\mc M$. The following definition presents two noncommutative counterparts of the classical pointwise (almost everywhere) convergence.

\begin{df}
A net $\{ x_\al\}_{\al\in\Lb}\su L^0$ is said to converge {\it almost uniformly} (a.u.) ({\it bilaterally almost uniformly} (b.a.u.)) to $\widehat x\in L^0$ if for any given $\ep>0$ there exists a projection $e\in \mc P(\mc M)$ such that $\tau(e^{\perp})\leq \ep$ and
\[
 \| (\wh x-x_\al)e\|_\ii\to 0\text{ \ (\,respectively,\ }\| e(\wh x-x_\al)e\|_\ii\to 0\,).
\]
\end{df}

\begin{rem} 
(1) A.u. convergence clearly implies b.a.u. convergence, while the converse in not true in general - see \cite[Example 3.1]{cl1}.

\noindent
(2) Pointwise convergence in the space $L^0$ is not topological in general, that is, it may not agree with any topology in $L^0$; see, for example, the note \cite{or}. 
\end{rem}

First ergodic theorem in the space $L^1$ equipped with b.a.u. convergence appeared in the groundbreaking paper \cite{ye}. Ergodic theorems in the spaces $L^p$, $1\leq p<\ii$, endowed with b.a.u. or a.u. for $p\ge 2$ convergence were comprehensively studied in another fundamental paper \cite{jx}. An alternative approach, based directly on \cite{ye}, to deriving ergodic theorems in the spaces $L^p$, $1<p<\ii$, was suggested in \cite{li}.

\vskip5pt
Given $x\in L^0$ and $t>0$, denote
\[
\mu_t(x)=\inf\big\{ \|xe\|_\ii:\, e\in\mc P(\mc M),\, \tau(e^\pp)\leq t\big\},
\]
the {\it $t$-th generalized singular number of $x$}\,-\,see \cite{fk}. The function $\mu_t(x)$, $t>0$, is called the {\it non-increasing rearrangement} of $x$. It is worth mentioning that, as it follows from the proof of \cite[Proposition 2.2]{fk},
\[
\mu_t(x)=\inf\big\{ \|xe\|_\ii:\, e\in\mc P(\mc A),\, \tau(e^\pp)\leq t\big\},
\]
where $\mc A$ is any subalgebra of $\mc M$ containing the spectral family $\big\{e_\lb(|x|)\big\}_{\lb\ge0}$ of the absolute value 
$|x|=(x^*x)^{1/2}$ of an operator $x\in L^0$. Thus, $\mu_t(x)$ can be alternatively defined as
\[
\mu_t(x)=\inf\big\{ \lb>0:\, \tau(e_\lb(|x|)^\pp)\leq t\big\}.
\]

\noindent
Now, let us denote
\[
\mc R_\tau=\big\{x\in L^1+\mc M:\ \mu_t(x)\to 0\text{\,\ as\ }t\to\ii\big\}.
\]
\vskip3pt
\noindent
In view of \cite[Lemma 2.5]{fk}, $\mc R_\tau$ is a $\ast$-algebra. Besides, it is clear that 
\[
\mc R_\tau=\big\{x\in L^1+\mc M:\ \tau(e_\lb(|x|)^\pp)<\ii\, \ \forall\ \lb>0\big\},
\]
so $\mc R_\tau$ is the noncommutative analogue of Fava's space $\mc R_0$ - see \cite[Section (2.2.15)]{es}.

\vskip5pt
Sequential completeness of the space $L^0$ with respect to a.u. and b.a.u. convergences was proved in \cite[Theorem 2.3]{cls}. In Section 2, we show that the space $L^0$ and - more importantly for this article - the space $\mc R_\tau$ are complete with respect to these convergences. 

Section 3 is devoted to the description how the a.u. (b.a.u.) convergence of a generic class of nets of linear maps in any fully symmetric space $E\su\mc R_\tau$ - in particular, in any space $L^p$, $1\leq p<\ii$ - can be deduced from the a.u. (respectively, b.a.u.) Cauchy property for such nets in the space $L^1$. 

In Section 4 we proceed to outline some possible applications of these results to the nocommutative ergodic theory.

\section{Pointwise completeness in $L^0(\mc M,\tau)$}
Here we aim to show that any almost uniform or bilaterally almost uniform Cauchy net converges in $L^0$ and to derive the same for the space $\mc R_\tau$. We will employ the notion of measure topology in $L^0$ - in which $L^0$ is a complete metric space - and use the fact that a Cauchy net in a complete metric space is convergent.
\vskip3pt

The {\it measure topology}\, $t_\tau$ in $L^0$ is given by the system
\[
\mc N(\ep,\dt)=\big\{ x\in L^0: \ \exists\ e\in \mc P(\mc M) \text{\,\ such that\,\ } \tau(e^{\perp})\leq\ep\text{\,\ and\ } \| xe\|_\ii\leq \dt  \big\}\, ;
\]
$\ep>0$, $\dt>0$, of (closed) neighborhoods of zero. The following is an immediate corollary of \cite[Theorem 1]{ne}; note that it is understood that the space $L^0$ is the completion of $\mc M$ with respect to $t_\tau$.
\begin{teo}\label{t1}
$(L^0,t_\tau)$ is a complete metrizable topological $\ast$-algebra.
\end{teo}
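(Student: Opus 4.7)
The statement is presented as an immediate corollary of Nelson's Theorem 1 in \cite{ne}, so the plan is essentially a verification that the hypotheses of Nelson's theorem match the setup here, together with the translation of its conclusions into the language used above. The core approach is therefore not to reprove the theorem but to extract the three required properties\,-\,Hausdorffness/metrizability, completeness, and continuity of the $\ast$-algebra operations\,-\,directly from Nelson's framework.

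First I would check that the neighborhood base $\{\mc N(\ep,\dt)\}_{\ep,\dt>0}$ agrees with Nelson's defining base for the measure topology on the $\tau$-measurable operators: an operator $x$ lies in $\mc N(\ep,\dt)$ iff some spectral projection $e$ of $|x|$ with $\tau(e^\pp)\le\ep$ satisfies $\|xe\|_\ii\le\dt$, which (via the spectral calculus and the characterization of $\mu_t$ recalled just after Definition 1) is exactly Nelson's condition. Hausdorffness is standard: if $x\in\bigcap_{\ep,\dt}\mc N(\ep,\dt)$ then $\mu_t(x)=0$ for all $t>0$, hence $x=0$. Metrizability is then immediate from the fact that $\{\mc N(1/n,1/n)\}_{n\ge1}$ forms a countable base at $0$; a translation-invariant metric can be written down explicitly, but I would simply invoke the Birkhoff--Kakutani theorem on topological groups with a countable neighborhood base at the identity.

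Completeness is the substantive assertion, and it is exactly the content of Nelson's Theorem 1: $L^0$ was in fact constructed as the completion of $\mc M$ in the measure topology, and Nelson shows this completion coincides with the algebra of $\tau$-measurable operators. So once the base-of-neighborhoods identification above is made, completeness is simply quoted.

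Continuity of addition, multiplication, and involution under $t_\tau$ is again part of Nelson's theorem. The potentially delicate point\,-\,and the one I would expect to be the main technical issue if one were to do this from scratch\,-\,is joint continuity of multiplication, which is not automatic in topological algebras; Nelson handles this via the submultiplicativity-type estimate $\mu_{s+t}(xy)\le\mu_s(x)\mu_t(y)$ (or equivalently, by choosing projections $e,f$ adapted to $x$ and $y$ so that $\|xye f\|_\ii\le\|xe\|_\ii\|yf\|_\ii$ after replacing $e$ by a suitable subprojection). Since we are entitled to quote Nelson, no such estimate needs to be reproduced here, and the proof reduces to the single sentence of identification plus the citation.
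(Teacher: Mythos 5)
Your proposal matches the paper exactly: the paper offers no independent proof, stating only that the theorem is an immediate corollary of Nelson's Theorem 1 in \cite{ne}, with $L^0$ understood as the completion of $\mc M$ in the measure topology. Your additional verifications (identification of the neighborhood base, Hausdorffness via $\mu_t(x)=0$, metrizability from a countable base, and the continuity of multiplication via the Fack--Kosaki-type estimate) are all correct and simply flesh out what the paper leaves implicit in the citation.
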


\begin{df}
A net $\{ x_\al\}_{\al\in\Lb}\su L^0$ is said to converge to $\widehat x\in L^0$ {\it in measure (bilaterally in measure)} 
if for every $\ep>0$ and $\dt>0$ there is $\gm=\gm(\ep,\dt)\in I$ such that, given $\al\in \Lb$ with $\al\ge\gm$, there exists a projection $e_\al\in\mc P(\mc M)$ satisfying conditions 
$\tau(e^{\perp}_\al)\leq \ep$ and
\[
\| (\widehat x-x_\al)e_\al\|_\ii\leq\dt\,\ (\text{respectively}, \,\| e_\al(\widehat x-x_\al)e_\al\|_\ii\leq\dt).
\]
\end{df} 

Proof of the following lemma - which idea is attributed to E. Nelson (see \cite[Appendix, Theorem B]{ra}); note also the paragraph before \cite[Theorem 1]{ye} - can be found in \cite[Lemma 2.1, Theorem 2.2]{cls}.
\begin{lm}\label{l1}
Let $x\in L^0$ and $e\in\mc P(\mc M)$ be such that $exe\in\mc M$. Then there exists $f\in\mc P(\mc M)$ such that
\[
\tau(f^\pp)\leq2\tau(e^\pp)\text{ \ and \ } \|xf\|_\ii\leq\|exe\|_\ii.
\]
\end{lm}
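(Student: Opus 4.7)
The plan is to exploit the Murray--von Neumann equivalence of the left and right support projections of the ``off-diagonal'' piece $y:=e^\pp xe$ of $x$ in order to trim $e$ by a projection of controlled trace on which the unbounded part of $xe$ gets killed.

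Since $e\in\mc M$ and $L^0$ is a $\ast$-algebra (Theorem~\ref{t1}), the element $y=e^\pp xe$ lies in $L^0$ and satisfies $e^\pp y=y=ye$. Let $s$ denote its right support projection, i.e., the smallest projection in $\mc M$ with $ys=y$, and let $l$ denote its left support. From $ye=y$ one reads off $s\leq e$, and from $e^\pp y=y$ one gets $l\leq e^\pp$. Via the polar decomposition $y=u|y|$ with $u\in\mc M$ a partial isometry satisfying $u^*u=s$ and $uu^*=l$, the projections $s$ and $l$ are Murray--von Neumann equivalent, so
\[
\tau(s)=\tau(l)\leq\tau(e^\pp).
\]
I would then set $f:=e-s\in\mc P(\mc M)$; since $s\leq e$ and $f^\pp=e^\pp+s$,
\[
\tau(f^\pp)=\tau(e^\pp)+\tau(s)\leq2\tau(e^\pp).
\]

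For the norm estimate, decompose $xe=exe+e^\pp xe=exe+y$ in $L^0$ and multiply on the right by $f$. Since $f\leq e$ yields $ef=f$, associativity in $L^0$ gives $xf=x(ef)=(xe)f=(exe)f+yf$. The decisive cancellation is $yf=0$: by the defining property of $s$ we have $ys=y$, so $yf=y(e-s)=ye-ys=y-y=0$. Hence $xf=(exe)f\in\mc M$ and
\[
\|xf\|_\ii\leq\|exe\|_\ii\|f\|_\ii\leq\|exe\|_\ii.
\]

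The main point to handle with care is the validity of the identities $xe=exe+e^\pp xe$ and $xf=(xe)f$ when $x$ is only $\tau$-measurable; interpreting them in the topological $\ast$-algebra $L^0$ furnished by Theorem~\ref{t1} avoids any explicit domain bookkeeping. Once this is granted, the argument reduces to the polar-decomposition trace identity and the obvious submultiplicativity of $\|\cdot\|_\ii$ against the bounded factor $exe$.
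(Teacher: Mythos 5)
Your argument is correct and is essentially the classical Nelson--Radin--Yeadon argument that the paper invokes by reference to \cite[Lemma 2.1, Theorem 2.2]{cls}: one kills the off-diagonal corner $e^\pp xe$ by removing its right support $s$ from $e$, and controls $\tau(s)$ via the Murray--von Neumann equivalence $s\sim l\leq e^\pp$ coming from the polar decomposition. All the delicate points (the partial isometry lying in $\mc M$, the identities $xf=(xe)f=(exe)f$ holding in the $\ast$-algebra $L^0$, and $\tau(f^\pp)=\tau(e^\pp)+\tau(s)$ by orthogonality) are handled correctly, so nothing further is needed.
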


The following statement is a version of \cite[Theorem 2.2]{cls} for nets. We provide a proof for the sake of the reader's convenience.
\begin{pro}\label{p2}
A net $\{ x_\al\}_{\al\in\Lb}\su L^0$ is Cauchy in measure if and only if it is Cauchy bilaterally in measure.
\end{pro}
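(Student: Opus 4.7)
The plan is to prove both implications separately, with the forward direction being essentially trivial and the reverse direction being the content of the proposition, handled via Lemma \ref{l1}.

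First, I would spell out what ``Cauchy in measure'' and ``Cauchy bilaterally in measure'' mean for a net: for every $\ep,\dt>0$ there exists $\gm\in\Lb$ such that whenever $\al,\bt\ge\gm$ one can find $e=e_{\al,\bt}\in\mc P(\mc M)$ with $\tau(e^\pp)\le\ep$ and $\|(x_\al-x_\bt)e\|_\ii\le\dt$ (respectively $\|e(x_\al-x_\bt)e\|_\ii\le\dt$). The implication ``Cauchy in measure $\Rightarrow$ Cauchy bilaterally in measure'' is immediate from the contractivity $\|e(x_\al-x_\bt)e\|_\ii\le\|(x_\al-x_\bt)e\|_\ii$: the same projection $e$ witnesses the bilateral estimate.

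For the nontrivial direction, assume the net is Cauchy bilaterally in measure. Given $\ep,\dt>0$, apply the bilateral Cauchy property with $\ep/2$ in place of $\ep$ to obtain $\gm$ such that for all $\al,\bt\ge\gm$ there is $e\in\mc P(\mc M)$ with $\tau(e^\pp)\le\ep/2$ and $\|e(x_\al-x_\bt)e\|_\ii\le\dt$. In particular $e(x_\al-x_\bt)e\in\mc M$, so Lemma \ref{l1} applied to the operator $x_\al-x_\bt\in L^0$ yields a projection $f\in\mc P(\mc M)$ with
\[
\tau(f^\pp)\le 2\tau(e^\pp)\le\ep\quad\text{and}\quad \|(x_\al-x_\bt)f\|_\ii\le\|e(x_\al-x_\bt)e\|_\ii\le\dt,
\]
which is exactly the Cauchy-in-measure condition for the pair $\al,\bt$.

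The only potential obstacle is verifying the hypothesis of Lemma \ref{l1}, namely that $e(x_\al-x_\bt)e$ lies in $\mc M$; this is built into the bilateral Cauchy condition, which provides a \emph{finite} uniform-norm bound on this element. Hence no further work is needed beyond the direct substitution above.
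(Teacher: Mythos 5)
Your proposal is correct and follows exactly the paper's argument: the forward direction via $\|e(x_\al-x_\bt)e\|_\ii\le\|(x_\al-x_\bt)e\|_\ii$ (which the paper dismisses as clear), and the reverse direction by applying the bilateral Cauchy property with $\ep/2$ and then invoking Lemma \ref{l1} to produce the projection $f$ with $\tau(f^\pp)\le 2\tau(e^\pp)\le\ep$ and the one-sided norm estimate. No gaps.
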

\begin{proof}
Clearly, if is sufficient to show the "if" implication. So, let $\{ x_\al\}_{\al\in\Lb}\su L^0$ be Cauchy bilaterally in measure and fix $\ep>0$, $\dt>0$. Then there is $\gm=\gm(\ep,\dt)$ such that for any $\al,\bt\in I$ with $\al,\bt\ge\gm$ there exists $e_{\al,\bt}\in\mc P(\mc M)$ for which
\[
\tau(e_{\al,\bt}^\pp)\leq\frac\ep2\text{ \ and \ } \|e_{\al,\bt}(x_\al-x_\bt)e_{\al,\bt}\|_\ii\leq\dt.
\]
Then, by Lemma \ref{l1}, for every $\al,\bt\ge\gm$ there exists $f_{\al,\bt}\in\mc P(\mc M)$ such that
\[
\tau(f_{\al,\bt}^\pp)\leq2\tau(e_{\al,\bt}^\pp)\leq\ep\text{ \ and \ } \|(x_\al-x_\bt)f_{\al,\bt}\|_\ii\leq\|e_{\al,\bt}(x_\al-x_\bt)e_{\al,\bt}\|_\ii\leq\dt,
\]
implying that the net $\{ x_\al\}_{\al\in\Lb}\su L^0$ is Cauchy in measure.
\end{proof}

Now we are ready to prove main results of this section:
\begin{teo}\label{t2}
The space $L^0$ is complete with respect to a.u. and b.a.u. convergences.
\end{teo}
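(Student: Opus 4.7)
The plan is to route the argument through the measure topology, using Theorem \ref{t1} to produce the candidate limit and Proposition \ref{p2} to reconcile the two one-sided Cauchy notions. The guiding idea, as the authors signal explicitly, is that a Cauchy net in the complete metric space $(L^0,t_\tau)$ converges, so it suffices to manufacture a measure-topology limit first and then promote it.

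First I would unpack what it means for a net $\{x_\al\}_{\al\in\Lb}\su L^0$ to be a.u.\ Cauchy: for every $\ep>0$ there is a projection $e=e_\ep\in\mc P(\mc M)$ with $\tau(e^\pp)\leq\ep$ such that $\|(x_\al-x_\bt)e\|_\ii\to 0$ as $\al,\bt$ grow in $\Lb$. This forces the net to be Cauchy in measure in the sense introduced after Theorem \ref{t1}. If instead the net is b.a.u.\ Cauchy, the analogous statement gives a net that is Cauchy \emph{bilaterally} in measure, and Proposition \ref{p2} immediately upgrades this to being Cauchy in measure. Thus in both cases Theorem \ref{t1} provides some $\wh x\in L^0$ with $x_\al\to \wh x$ in the measure topology $t_\tau$.

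Next I would upgrade $t_\tau$-convergence to genuine a.u.\ (respectively, b.a.u.) convergence to $\wh x$. Fix $\ep>0$ and pick $e=e_\ep$ as above. In the a.u.\ case, $\{x_\al e\}$ is $\|\cdot\|_\ii$-Cauchy in the Banach algebra $\mc M$, so it converges uniformly to some $y\in\mc M$. Since multiplication by $e\in\mc M$ is $t_\tau$-continuous by Theorem \ref{t1}, we also have $x_\al e\to \wh x e$ in measure; uniform convergence implies convergence in measure, so by uniqueness of limits in the Hausdorff space $(L^0,t_\tau)$ we conclude $y=\wh x e$. Consequently $\|(\wh x-x_\al)e\|_\ii=\|y-x_\al e\|_\ii\to 0$, which is exactly the desired a.u.\ convergence. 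The b.a.u.\ case is strictly analogous, with $ex_\al e$ in place of $x_\al e$ and $e\wh x e$ in place of $\wh x e$.

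The main obstacle I anticipate is the matching step in the preceding paragraph: identifying the uniform limit $y$ of $\{x_\al e\}$ with $\wh x e$. This is precisely where the fact that $(L^0,t_\tau)$ is a Hausdorff topological $\ast$-algebra, provided by Theorem \ref{t1}, is indispensable — both for joint continuity of right multiplication by $e$ and for uniqueness of $t_\tau$-limits. Everything else is essentially bookkeeping, and no additional structure beyond what Theorem \ref{t1}, Lemma \ref{l1}, and Proposition \ref{p2} already supply should be required.
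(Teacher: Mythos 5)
Your proposal is correct and follows essentially the same route as the paper: pass from the a.u.\ (b.a.u.) Cauchy property to the Cauchy-in-measure property via Proposition \ref{p2}, obtain the limit $\wh x$ from completeness of $(L^0,t_\tau)$, and then identify the uniform limit of $x_\al e$ (resp.\ $ex_\al e$) with $\wh x e$ (resp.\ $e\wh x e$) using continuity of multiplication and uniqueness of limits in the measure topology. The only cosmetic difference is that the paper spells out the extraction of a Cauchy subsequence to justify that a Cauchy net in the complete metric space $(L^0,d)$ converges, a fact you invoke directly.
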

\begin{proof}
We will show that $L^0$ is b.a.u. complete. Proof in the a.u. case is similar. Let $\{x_\al\}_{\al\in\Lb}\su L^0$ be b.a.u. Cauchy.
Then $\{x_\al\}_{\al\in\Lb}$ is clearly Cauchy bilaterally in measure, which, by Proposition \ref{p2}, implies that it is Cauchy in measure. 

Let $d$ be a metric in $L^0$ compatible with the measure topology $t_\tau$. Then for every $\dt>0$ there is $\gm\in I$ such that
$d(x_\al,x_\bt)\leq\dt$ for all $\al,\bt\ge\gm$. 

So, pick $\gm_1\in I$ such that $d(x_\al,x_\bt)\leq1$ whenever $\al,\bt\ge\gm_1$. Further, given $\gm_n\in I$ such that $\displaystyle d(x_\al,x_\bt)\leq\frac1n$ for all $\al,\bt\ge\gm_n$, choose $\gm_{n+1}\in I$ such that
\[
\gm_{n+1}\ge\gm_n\text{\, \ and\, \ }d(x_\al,x_\bt)\leq\frac1{n+1}\ \ \forall\ \al,\bt\ge\gm_{n+1}.
\] 
It follows that the sequence $\{x_{\gm_n}\}$ is Cauchy in measure, hence, in view of Theorem \ref{t1}, there exists $\wh x\in L^0$ such that $d(x_{\gm_n},\wh x)\to 0$ as $n\to\ii$. 

Given $\dt>0$, there is $n_1$ such that $d(x_{\gm_n},\wh x)\leq\dt/2$ whenever $n\ge n_1$. In addition, there is $n_2$ such that $\displaystyle d(x_\al,x_\bt)\leq\frac\dt2$ whenever $\al,\bt\ge\gm_{n_2}$. If $n_0=\max\{n_1,n_2\}$, then  
\[
d(x_{\gm_{n_0}},\wh x)\leq\frac\dt2\text{\ \ \ and, as\ } \gm_{n_0}\ge\gm_{n_1}, \ d(x_\al,x_{\gm_{n_0}})\leq\frac\dt2 \ \ \forall\ \al\ge\gm_{n_0}.
\]
Therefore\, $d(x_\al,\wh x)\leq\dt$ whenever $\al\ge\gm_{n_0}$, implying that $x_\al\to \wh x$ in measure.

Finally, fix $\ep>0$. Since $\{x_\al\}_{\al\in\Lb}$ is b.a.u. Cauchy, there exists $e\in\mc P(\mc M)$ with $\tau(e^\pp)\leq\ep$ such that the net $\{ex_\al e\}_{\al\in\Lb}$ is $\|\cdot\|_\ii$\,-\,Cauchy. Then, as $(\mc M, \|\cdot\|_\ii)$ is a complete metric space, we show as above that there is $x_e\in\mc M$ such that $\|ex_\al e-x_e\|_\ii\to 0$, implying that $ex_\al e\to x_e$ in measure. Also, as $x_\al\to\wh x$ in measure, it follows from Theorem \ref{t1} that $ex_\al e\to e\wh xe$ in measure, hence $x_e=e\wh xe$, and we obtain
\[
\|e(\wh x-x_\al)e\|_\ii=\|x_e-ex_\al e\|_\ii\to 0.
\]
Therefore, it follows that $L^0$ is b.a.u. complete.
\end{proof}

\begin{teo}\label{t3}
The space $\mc R_\tau$ is complete with respect to a.u. and b.a.u. convergences.
\end{teo}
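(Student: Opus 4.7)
The plan is to piggyback on Theorem \ref{t2}: given a b.a.u.\ (or a.u.) Cauchy net $\{x_\al\}_{\al\in\Lb}\su\mc R_\tau$, Theorem \ref{t2} already produces $\wh x\in L^0$ with $x_\al\to\wh x$ b.a.u.\ (respectively, a.u.), so the work reduces to certifying $\wh x\in\mc R_\tau$. I will rely on the equivalent spectral characterization $\mc R_\tau=\{x\in L^1+\mc M:\tau(e_\lb(|x|)^\pp)<\ii\ \forall\ \lb>0\}$ recalled in the introduction, since this distribution-function condition is the one that interacts most naturally with a.u.\ and b.a.u.\ convergence.

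I will treat the b.a.u.\ case, as the a.u.\ case is analogous and slightly simpler. Fix $\lb>0$ and a small $\ep>0$. By b.a.u.\ convergence there is a projection $e$ with $\tau(e^\pp)\leq\ep$ such that $\|e(\wh x-x_\al)e\|_\ii\to 0$; choose $\al_0$ for which $\|e(\wh x-x_{\al_0})e\|_\ii\leq\lb/4$. Since $e(\wh x-x_{\al_0})e\in\mc M$, Lemma \ref{l1} converts this two-sided bound to a one-sided one: there is $f\in\mc P(\mc M)$ with $\tau(f^\pp)\leq 2\ep$ and $\|(\wh x-x_{\al_0})f\|_\ii\leq\lb/4$. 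Using $x_{\al_0}\in\mc R_\tau$, the spectral characterization supplies $p\in\mc P(\mc M)$ with $\tau(p^\pp)<\ii$ and $\|x_{\al_0}p\|_\ii\leq\lb/4$. Set $q=f\wedge p$. Then $\tau(q^\pp)\leq\tau(f^\pp)+\tau(p^\pp)\leq 2\ep+\tau(p^\pp)<\ii$, and since $fq=q=pq$,
\[
\|\wh x q\|_\ii\leq\|(\wh x-x_{\al_0})f\|_\ii+\|x_{\al_0}p\|_\ii\leq\lb/4+\lb/4<\lb,
\]
which yields $\tau(e_\lb(|\wh x|)^\pp)\leq\tau(q^\pp)<\ii$. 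In the a.u.\ case the same argument is carried out with Lemma \ref{l1} omitted, as the one-sided control $\|(\wh x-x_{\al_0})e\|_\ii\leq\lb/4$ is available from the very definition of a.u.\ convergence.

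The step I expect to demand the most care is the additional verification $\wh x\in L^1+\mc M$, since the distribution-function bound $\tau(e_\lb(|\wh x|)^\pp)<\ii$ does not by itself place $\wh x$ in $L^1+\mc M$. A natural route is to run the construction above once with $\lb=1$ to obtain a projection $q$ with $\tau(q^\pp)<\ii$ and $\wh x q\in\mc M$, and then to write $\wh x=\wh x q+\wh x q^\pp$ and argue $\wh x q^\pp\in L^1$ by combining $x_{\al_0}\in L^1+\mc M$ (which, together with $\tau(q^\pp)<\ii$, gives $x_{\al_0}q^\pp\in L^1$) with the a.u./b.a.u.\ proximity of $\wh x$ and $x_{\al_0}$ furnished by Lemma \ref{l1}. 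Reconciling this $L^1+\mc M$ membership with the spectral estimate is where the genuine obstacle lies.
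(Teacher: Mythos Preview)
Your reduction to Theorem \ref{t2} followed by the verification that $\mu_t(\wh x)\to 0$ via a single comparison element $x_{\al_0}$, one application of Lemma \ref{l1}, and a single intersection $q=f\wedge p$ is correct and in fact tidier than the paper's route, which first extracts a b.a.u.-convergent subsequence $\{x_n\}$, then builds a decreasing chain $e_1\ge e_2\ge\cdots$ with $\tau(e_n^\pp)<\ii$ and $\|x_ne_n\|_\ii<\dt$, before invoking Lemma \ref{l1} at the very end. Both arguments arrive at the same conclusion: for every $\dt>0$ there is $f\in\mc P(\mc M)$ with $\tau(f^\pp)<\ii$ and $\|\wh xf\|_\ii<\dt$, hence $\mu_t(\wh x)\to 0$.

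Your worry about the residual requirement $\wh x\in L^1+\mc M$ is not a technicality, and the paper simply elides it: the sentence ``that is, that $\mu_t(x)\to 0$'' in the paper's proof treats the singular-number condition as the \emph{entire} definition of $\mc R_\tau$. With the paper's stated definition that step cannot be completed, and your proposed route through $\wh xq^\pp\in L^1$ is bound to fail. Indeed, on $((0,\ii),\nu)$ set $f_n=t^{-1}\chi_{[1/n,\,1]}$; each $f_n\in L^1\cap L^\ii\su\mc R_\nu$, the sequence is a.u.\ Cauchy, and its a.u.\ limit $f=t^{-1}\chi_{(0,\,1]}$ satisfies $\mu_t(f)\to 0$ yet $\int_0^1\mu_t(f)\,dt=\ii$, so $f\notin L^1+L^\ii$ and hence $f\notin\mc R_\nu$. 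Thus $\mc R_\tau$ as defined in the introduction is \emph{not} a.u.\ closed. Both your argument and the paper's become complete if one reads $\mc R_\tau$ as $\{x\in L^0:\mu_t(x)\to 0\}=\{x\in L^0:\tau(e_\lb(|x|)^\pp)<\ii\ \forall\,\lb>0\}$, dropping the $L^1+\mc M$ clause; under that reading your proof is correct and somewhat more efficient than the paper's.
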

\begin{proof}
In view of Theorem \ref{t2} and, since $\mc R_\tau\su L^0$ and a.u. convergence implies b.a.u. convergence, it is sufficient to show that $\mc R_\tau$ is b.a.u. closed.

Let a net $\{x_\al\}_{\al\in I}\su\mc R_\tau$ be such that $x_\al\to\wh x\in L^0$ b.a.u. Since $\mu_t(x)$ is a non-increasing function of $t$, in order to show that $\wh x\in\mc R_\tau$, that is, that $\mu_t(x)\to 0$ as $t\to\ii$, it is sufficient to verify that for every $\dt>0$ there is $t>0$ such that $\mu_t(x)<\dt$. Since $\|xe\|_\ii<\dt$ for some $e\in\mc P(\mc M)$ with $\tau(e^\pp)\leq t$ implies that $\mu_t(x)<\dt$, this in turn will follow if we show that there exists $f\in\mc P(\mc M)$ such that 
\[
\tau(f^\pp)<\ii\text{ \ and \ }\|\wh xf\|_\ii<\dt.
\]

Since $x_\al\to\wh x$\, b.a.u., given $n\in\mathbb N$, there exits $e_n\in\mc P(\mc M)$ such that 
\[
\tau(e_n^\pp)\leq2^{-n}\text{ \ and \ } \|e_n(\wh x-x_\al)e_n\|_\ii\xrightarrow[\al]\ 0.
\]
Therefore, for each $n$ there is $x_n\in\{x_\al\}_{\al\in I}$ for which $\|e_n(\wh x-x_n)e_n\|_\ii\leq\displaystyle\frac1n$. 

Fix $\ep>0$, and let $N$ be such that $\sum\limits_{n=N}^\ii2^{-n}\leq\ep$. Then, with $e=\bigwedge\limits_{n\ge N}e_n$, we have
\[
\tau(e^\pp)\leq\ep\text{ \  and \ }\|e(\wh x-x_n)e\|_\ii\leq\frac1n\ \ \forall \ n\ge N, 
\]
implying that $\mc R_\tau\ni x_n\to\wh x$\, b.a.u.

Now, let $\dt>0$. Since $x_1\in\mc R_\tau$, there is $e_1\in\mc P(\mc M)$ with $\tau(e_1^\pp)<\ii$ such that $\|x_1e_1\|_\ii<\dt$. As $x_2\in L^0$, there exists 
$\mc P(\mc M)\ni h_1\leq e_1$ such that $\tau(h_1^\pp)<\ii$ and $x_2h_1\in\mc M$. Next, $x_2\in\mc R_\tau$ entails that there is $\mc P(\mc M)\ni e_2\leq h_1$ such that
$\tau(e_2^\pp)<\ii$ and $\|x_2e_2\|_\ii<\dt$. Repeating this process, one can construct a sequence $\{e_n\}\su\mc P(\mc M)$ satisfying conditions 
\[
e_n\ge e_{n+1},\, \ \tau(e_n^\pp)<\ii,\text{\ and \ }\|x_ne_n\|_\ii<\dt,\text{\,hence\ }\|e_nx_ne_n\|_\ii<\dt,\, \ \forall\ n.
\]

Further, $x_n\to\wh x$ b.a.u. implies that there exists $\mc P(\mc M)\ni e\leq e_1$ such that $\tau(e-e_1)<\ii$, hence $\tau(e^\pp)<\ii$, and $\|e(\wh x-x_n)e\|_\ii\to 0$, implying that
\[
\|(e_n\land e)\wh x(e_n\land e)\|_\ii-\|(e_n\land e)x_n(e_n\land e)\|_\ii\to 0.
\]
Therefore, since $\|(e_n\land e)x_n(e_n\land e)\|_\ii<\dt$ for every $n$, it follows that there exists $n_0$ such that $\|(e_{n_0}\land e)\wh x(e_{n_0}\land e)\|_\ii<\dt$. Then, by Lemma \ref{l1}, it is possible to find $f\in\mc P(\mc M)$ such that 
\[
\tau(f^\pp)\leq2\tau((e_{n_0}\land e)^\pp)\leq2\tau(e_{n_0}^\pp)+2\tau(e^\pp)<\ii\text{ \ and \ }\|\wh xf\|_\ii<\dt,
\]
which completes the argument.
\end{proof}

\section{Pointwise convergence in fully symmetric spaces $E\su\mc R_\tau$}
A non-zero Banach space $(E,\|\cdot\|_E)\su L^0$ is called {\it fully symmetric} if conditions
\[
x\in E, \ y\in L^0,\text{\ and\ }\int_0^s\mu_t(y)dt\leq\int_0^s\mu_t(x)dt\ \ \forall\ s>0\text{\ (denoted\ } y\prec\prec x)
\]
imply that $y\in E$ and $\|y\|_E\leq\|x\|_E$.
\vskip5pt
It is well-known that the space $L^p=(L^p(\mc M,\tau), \|\cdot\|_p)$ is fully symmetric and $L^p\su\mc R_\tau$ for each $1\leq p<\ii$. Besides, if $E\su L^0$ is a fully symmetric space, then $E\su\mc R_\tau$ if and only if $\mathbf 1\notin E$ \cite[Proposition 2.2]{cl1}, implying that the noncommutative Orlicz, Lorentz and Marcinkiewicz spaces $E$ on $(\mc M,\tau)$ with $\mathbf 1\notin E$ also lie in $\mc R_\tau$; see, for example, \cite[Section 7]{cl2}.

\begin{rem}
In contrast with Theorem \ref{t3}, even in the commutative case, a fully symmetric space  may not be complete with respect to almost uniform convergence.  
Indeed, denote by $\nu$ Lebesgue measure on the interval $(0,\ii)$ and define
\[
\mc R_\nu=\big\{\, f\in L^1(\nu)+L^\ii(\nu): \  \nu \{|f|>\lb\}<\ii\ \ \forall \ \lb>0\, \big\},
\]

\noindent
the commutative precursor of $\mc R_\tau$, known as Fava's space $\mc R_0$; see \cite{cl4} for details.

\vskip5pt
Given\,\ $t\in (0,\ii)$, set
\[
f_n(t)=\sum_{k=1}^n2^k\cdot\chi_{(2^{-k},\, 2^{-k+1}]}(t),\ \, n=1,2,\dots
\]
It is easy to see that $\{f_n\}\su L^1(\nu)$ and that $\{f_n\}$ converges almost uniformly to a function $f\in\mc R_\nu$ such that $f\notin L^1(\nu)$. Therefore, the fully symmetric space\, $L^1(\nu)\su\mc R_\nu$ is not complete with respect to almost uniform convergence. However, for a special class of pointwise Cauchy nets in a fully symmetric space $E$ the limits belong to $E$ - see Theorem \ref{t4} below.
\end{rem}

A linear operator $A: L^1+\mc M\to L^1+\mc M$ is called a {\it Dunford-Schwartz operator} (denoting $A\in DS$) if $A(L^1)\su L^1$, $A(\mc M)\su\mc M$ and
\[
\|A(x)\|_1\leq\|x\|_1\, \ \forall\ x\in L^1\text{ \ and \ }\|A(x)\|_\ii\leq\|x\|_\ii\, \ \forall\ x\in \mc M.
\]

The following fact is well-known in the commutative case; see, for example, \cite[Chapter II, Section 4]{kps}.
\begin{pro}\label{p3}
If $A\in DS$, then $A(x)\prec\prec x$ for all $x\in L^1+\mc M$.
\end{pro}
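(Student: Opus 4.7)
The plan is to reduce the statement to a K-functional argument, mirroring the commutative proof in \cite[Chapter II, Section 4]{kps}. For the compatible Banach couple $(L^1,\mc M)$ define the Peetre K-functional
\[
K(s,x)=\inf\bigl\{\|y\|_1+s\,\|z\|_\ii\,:\,x=y+z,\ y\in L^1,\ z\in\mc M\bigr\},\qquad s>0,
\]
for $x\in L^1+\mc M$. The key input is the identity
\[
K(s,x)=\int_0^s\mu_t(x)\,dt,\qquad s>0,
\]
a noncommutative analogue of the classical Peetre formula. This is the content (essentially) of \cite[Proposition 2.5 and its proof]{fk}: one checks ``$\leq$'' by writing $x=u|x|$ and taking the spectral decomposition
\[
y=u|x|e_\lb(|x|)^\pp,\qquad z=u|x|e_\lb(|x|),
\]
with $\lb=\mu_s(x)$, so that $\|z\|_\ii\leq\mu_s(x)$ and $\|y\|_1=\int_s^\ii\mu_t(x)\,dt$; the reverse inequality is obtained from the singular-value description of $\mu_t$ via any admissible decomposition $x=y+z$.

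Granting this identity, the argument for the proposition is essentially a single line. Let $A\in DS$ and fix $s>0$. For any decomposition $x=y+z$ with $y\in L^1$, $z\in\mc M$, the image $A(x)=A(y)+A(z)$ is again an admissible decomposition, and the contractive properties in the definition of $DS$ give
\[
\|A(y)\|_1+s\,\|A(z)\|_\ii\leq\|y\|_1+s\,\|z\|_\ii.
\]
Taking the infimum over admissible decompositions of $x$ yields $K(s,A(x))\leq K(s,x)$, and therefore
\[
\int_0^s\mu_t(A(x))\,dt=K(s,A(x))\leq K(s,x)=\int_0^s\mu_t(x)\,dt
\]
for every $s>0$, which is precisely $A(x)\prec\prec x$.

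The only substantive step is the K-functional identity; the contraction step is immediate. I would therefore simply cite the identity (from \cite{fk} or from the standard references on noncommutative symmetric spaces) rather than reprove it, making the proof of the proposition very short.
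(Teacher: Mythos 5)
Your proof is correct and is essentially identical to the paper's: both rest on the K-functional identity $K(s,x)=\int_0^s\mu_t(x)\,dt$ (cited from Fack--Kosaki, see also \cite[Proposition 2.5]{ddp}) followed by the observation that a Dunford--Schwartz operator maps admissible decompositions to admissible decompositions without increasing $\|y\|_1+s\|z\|_\ii$. The only quibble is that your parenthetical sketch of the identity's proof is slightly off (with $y=u|x|e_\lb(|x|)^\pp$ one gets $\|y\|_1=\int_0^{\tau(e_\lb(|x|)^\pp)}\mu_t(x)\,dt$, not $\int_s^\ii\mu_t(x)\,dt$; the standard argument truncates $|x|$ at level $\mu_s(x)$), but since you defer to the literature for that identity, as does the paper, this does not affect the proof.
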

\begin{proof}
As it is proven right after \cite[Theorem 4.4]{fk} - see also \cite[Proposition 2.5]{ddp} - given $x\in L^1+\mc M$ and $s>0$, we have
\[
\int_0^s\mu_t(x)ds=\inf\big\{\|y\|_1+s\|z\|_\ii:\ x=y+z,\ x\in L^1,\ y\in \mc M\big\}.
\]
Let $x\in L^1+\mc M$ and $s>0$. As $x=y+z$, with $y\in L^1$ and $z\in\mc M$, implies that $A(x)=A(y)+A(z)$, where
$A(y)\in L^1$ and $A(z)\in\mc M$, we can write
\[
\begin{split}
\int_0^s\mu_t(A(x))ds&=\inf\big\{\|y\|_1+s\|z\|_\ii:\ A(x)=y+z,\ x\in L^1,\ y\in \mc M\big\}\\
&\leq\inf\big\{\|A(y)\|_1+s\|A(z)\|_\ii:\ A(x)=A(y)+A(z),\ x\in L^1,\ y\in \mc M\big\}\\
&\leq\inf\big\{\|y\|_1+s\|z\|_\ii:\ A(x)=A(y)+A(z),\ x\in L^1,\ y\in \mc M\big\}\\
&\leq\inf\big\{\|y\|_1+s\|z\|_\ii:\ x=y+z,\ x\in L^1,\ y\in \mc M\big\}\\
&=\int_0^s\mu_t(x)ds,
\end{split}
\]
so the result follows.
\end{proof}

The following statement provides a useful characterization of the $*$-algebra $\mc R_\tau$.
\begin{pro}\label{p1}
Let $x^*=x\in L^0$. Then $x\in\mc R_\tau$ if and only if for any $\dt>0$ there exist $y_\dt\in L^1$ and $z_\dt\in\mc M$ such that
\begin{equation}\label{e1}
\|z_\dt\|_\ii\leq\dt\text{ \ and \ }x=y_\dt+z_\dt.
\end{equation}
\end{pro}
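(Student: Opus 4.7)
The plan is to handle the two implications separately, using spectral projections of $x = x^*$ for ($\Rightarrow$) and subadditivity of $\mu_t$ for ($\Leftarrow$).

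For the forward direction, fix $\dt > 0$ and take $p = \chi_{[-\dt,\dt]}(x) = e_\dt(|x|) \in \mc P(\mc M)$, which commutes with $x$. Since $\mu_t(x) \to 0$, there is $t_0$ with $\mu_{t_0}(x) < \dt$, and the spectral characterization of $\mu_t$ recalled in the preliminaries yields $\tau(p^\pp) \leq t_0 < \ii$. I would then set $z_\dt = xp$ and $y_\dt = xp^\pp$. Because $p$ commutes with $x$, we have $\|z_\dt\|_\ii = \|\,|x|p\,\|_\ii \leq \dt$, so $z_\dt \in \mc M$ with the required norm bound. It remains to verify $y_\dt \in L^1$.

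For this I would compute $\|xp^\pp\|_1 = \int_0^\ii \mu_t(xp^\pp)\, dt$. Since the right support of $xp^\pp$ lies under $p^\pp$, one has $\mu_t(xp^\pp) = 0$ for $t \geq \tau(p^\pp)$, while on $[0, \tau(p^\pp)]$ the standard estimate $\mu_t(xp^\pp) \leq \mu_t(x)$ applies. The inclusion $x \in L^1 + \mc M$ lets one write $x = a + b$ with $a \in L^1$, $b \in \mc M$, giving $\mu_t(x) \leq \mu_{t/2}(a) + \|b\|_\ii$, hence $\int_0^T \mu_t(x)\, dt \leq 2\|a\|_1 + T\|b\|_\ii < \ii$ for every $T > 0$. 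Combining these yields $\|y_\dt\|_1 < \ii$.

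For the converse, assume the decomposition $x = y_\dt + z_\dt$ exists for every $\dt > 0$; then $x \in L^1 + \mc M$, and the subadditivity $\mu_{s+t}(u+v) \leq \mu_s(u) + \mu_t(v)$ from \cite{fk} gives
\[
\mu_{2t}(x) \leq \mu_t(y_\dt) + \|z_\dt\|_\ii \leq \mu_t(y_\dt) + \dt.
\]
Since $y_\dt \in L^1$ forces $\mu_t(y_\dt) \to 0$ as $t \to \ii$ (monotone non-increasing function with convergent integral), one obtains $\limsup_{t \to \ii} \mu_t(x) \leq \dt$, and arbitrariness of $\dt$ yields $\mu_t(x) \to 0$, i.e., $x \in \mc R_\tau$. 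The main obstacle is the $L^1$-integrability of $y_\dt = xp^\pp$ in the forward direction, which intertwines the finite-trace support coming from $x \in \mc R_\tau$ with the local integrability of $\mu_t(x)$ coming from $x \in L^1 + \mc M$; the remaining steps are routine spectral calculus and standard properties of generalized singular numbers.
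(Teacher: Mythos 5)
Your proof is correct, and in both directions it rests on the same core ideas as the paper's: a spectral truncation of $x$ at height comparable to $\dt$ for the forward implication, and the Fack--Kosaki subadditivity $\mu_{s+t}(u+v)\le\mu_s(u)+\mu_t(v)$ for the converse (your converse argument is essentially identical to the paper's, differing only in that you pass to a $\limsup$ rather than calibrating with $\dt/2$). The forward direction, however, diverges in two details. First, you truncate $x$ directly with $p=\chi_{[-\dt,\dt]}(x)$, whereas the paper first splits $x=x_+-x_-$ and truncates each positive part separately at $\dt/2$; your route is a little cleaner, and the bound $\|xp\|_\ii\le\dt$ is immediate since $p$ commutes with $x$. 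Second, and more substantively, the verification that the unbounded piece is integrable is genuinely different: the paper writes $y_\dt^+=y_\dt^+e_{\dt/2}^\pp=y^+e_{\dt/2}^\pp+(z^+-z_\dt^+)e_{\dt/2}^\pp$, invoking an a priori decomposition $x_+=y^++z^+$ with $y^+\in L^1$, $z^+\in\mc M$ and the fact that a bounded operator compressed by a finite-trace projection lies in $L^1$; you instead integrate singular values, observing that $\mu_t(xp^\pp)\le\mu_t(x)$ vanishes for $t\ge\tau(p^\pp)$ and that $\int_0^T\mu_t(x)\,dt<\ii$ for every finite $T$ because $x\in L^1+\mc M$. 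Both arguments use the hypothesis $x\in L^1+\mc M$ at precisely this point and both are valid; yours trades the paper's algebraic identity for the rearrangement characterization of $L^1$, which makes the role of the finite-trace support $\tau(p^\pp)<\ii$ somewhat more transparent, at the cost of quoting the formula $\|y\|_1=\int_0^\ii\mu_t(y)\,dt$ as the membership criterion for $L^1$.
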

\begin{proof}
$"\Rightarrow" :\ $ Fix $\dt>0$. Since $|x|\in\mc R_\tau$, we have $0\leq x_+=\displaystyle\frac{|x|+x}2\in\mc R_\tau$ and  $0\leq x_-=\displaystyle\frac{|x|-x}2\in\mc R_\tau$. If $x_+=\int_0^\ii\lb de_\lb$ is the spectral decomposition of $x_+$, define $y_\dt^+=\int_{\dt/2}^\ii\lb de_\lb$ and $z_\dt^+=\int_0^{\dt/2}\lb de_\lb$. Then we have 
\[
x_+=y_\dt^++z_\dt^+\text{ \ and \ }\|z_\dt^+\|_\ii\leq\frac\dt2.
\]
Next, $x_+\in\mc R_\tau$ implies that $\tau(e_{\dt/2}^\pp)<\ii$ and $x_+=y^++z^+$ for some $y^+\in L^1$ and $z^+\in\mc M$, hence
\[
y_\dt^+=y_\dt^+e_{\dt/2}^\pp=y^+e_{\dt/2}^\pp+(z^+-z_\dt^+)e_{\dt/2}^\pp\in L^1.
\]
Similarly, $x_-=y_\dt^-+z_\dt^-$, where $y_\dt^-\in L^1$ and $\|z_\dt^-\|_\ii\leq\dt/2$, so it follows that
\[
x=(y_\dt^+-y_\dt^-)+(z_\dt^+-z_\dt^-),\text{ \ where \ }y_\dt^+-y_\dt^-\in L^1\text{ \ and \ }\|z_\dt^+-z_\dt^-\|_\ii\leq\dt.
\]

$"\Leftarrow" :\ $ Let $x^*=x\in L^0$ be such that for any $\dt>0$ there exist $y_\dt\in L^1$ and $z_\dt\in\mc M$ for which conditions (\ref{e1}) hold. Fix $\dt>0$. There exist $y_\dt\in L^1$ and $z_\dt\in\mc M$ such that $\|z_\dt\|_\ii\leq\dt/2$ and 
$x=y_\dt+z_\dt$. Then, by \cite[Lemma 2.5]{fk}, we have 
\[
\mu_t(x)\leq\mu_{t/2}(y_\dt)+\mu_{t/2}(z_\dt)\leq\mu_{t/2}(y_\dt)+\frac\dt2
\]
whenever $t>\dt$. Also, since $y_\dt\in L^1$, it follows that $\mu_t(y_\dt)\to 0$ as $t\to\ii$, so there is $t_0>0$ such that $t\ge t_0$ entails $\mu_t(y_\dt)\leq\dt/2$. Thus, we conclude that $\mu_t(x)\leq\dt$ for all $t>\max\{\dt,t_0\}$, implying that 
$\mu_t(x)\to 0$ as $t\to\ii$, so $x\in\mc R_\tau$.
\end{proof}
\vskip5pt
We will call a map $A: L^0\to L^0$ {\it selfadjoint} if $A(x)^*=A(x)$ whenever $x^*=x$. Here is an immediate consequence of Proposition \ref{p1}:

\begin{cor}\label{c1}
If a linear selfadjoint map $A: L^1+\mc M\to L^1+\mc M$ is such that $A(L^1)\su L^1$, 
$A(\mc M)\su\mc M$, and $\|A(x)\|_\ii\leq c\|x\|_\ii$ for some $c>0$ and any $x\in\mc M$, then $A(\mc R_\tau)\su\mc R_\tau$.
\end{cor}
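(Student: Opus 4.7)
The plan is to invoke Proposition \ref{p1} as a criterion for membership in $\mc R_\tau$. Since the proposition's hypothesis requires $x^*=x$, the first step is to reduce to the selfadjoint case. Given $x\in\mc R_\tau$, write $x=u+iv$ where $u=(x+x^*)/2$ and $v=(x-x^*)/(2i)$. Because $\mc R_\tau$ is a $*$-algebra, both $u$ and $v$ lie in $\mc R_\tau$, and by linearity $A(x)=A(u)+iA(v)$; since $\mc R_\tau$ is a vector space it therefore suffices to show $A(u),A(v)\in\mc R_\tau$. Hence we may assume $x^*=x$.

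Next, for selfadjoint $x\in\mc R_\tau$ and a given $\dt>0$, I would apply the forward direction of Proposition \ref{p1} with the parameter $\dt/c$ in place of $\dt$, obtaining a decomposition
\[
x=y+z,\quad y\in L^1,\ z\in\mc M,\ \|z\|_\ii\leq\dt/c.
\]
Applying $A$ and using the hypotheses $A(L^1)\su L^1$, $A(\mc M)\su\mc M$, and $\|A(z)\|_\ii\leq c\|z\|_\ii$, one gets
\[
A(x)=A(y)+A(z),\quad A(y)\in L^1,\ A(z)\in\mc M,\ \|A(z)\|_\ii\leq\dt.
\]
Since $A$ is selfadjoint and $x^*=x$, the element $A(x)$ is selfadjoint as well, so the converse direction of Proposition \ref{p1} applies and yields $A(x)\in\mc R_\tau$.

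There is no serious obstacle in this argument; it is essentially a bookkeeping exercise built on Proposition \ref{p1}. The only point requiring a small amount of care is the reduction to the selfadjoint case, which uses both the linearity of $A$ and the fact (established earlier) that $\mc R_\tau$ is a $*$-algebra, and the passage from the $L^1$--$\mc M$ bound on $A$ to the claim that $A$ maps $\mc R_\tau$ to $\mc R_\tau$, which is where the constant $c$ is absorbed by rescaling the parameter $\dt$ in the Fava-type decomposition.
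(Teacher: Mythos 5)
Your proof is correct and follows essentially the same route as the paper's: decompose $x=y_\dt+z_\dt$ via Proposition \ref{p1} with parameter $\dt/c$, apply $A$, and invoke the converse direction of Proposition \ref{p1}. In fact you are slightly more careful than the paper, which applies Proposition \ref{p1} directly to an arbitrary $x\in\mc R_\tau$ without spelling out the reduction to the selfadjoint case via $x=u+iv$; your explicit use of the $*$-algebra property of $\mc R_\tau$ and the selfadjointness of $A$ fills that small gap.
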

\begin{proof}
Let $x\in\mc R_\tau$ and fix $\dt>0$. By Proposition \ref{p1}, there exist $y_\dt\in L^1$ and $z_\dt\in\mc M$ such that 
$\|z_\dt\|_\ii\leq\dt/c$ and $x=y_\dt+z_\dt$. Then $A(x)=A(y_\dt)+A(z_\dt)$, where $A(y_\dt)\in L^1$ and $\|A(z_\dt)\|_\ii\leq\dt$, and Proposition \ref{p1} implies that $A(x)\in\mc R_\tau$.
\end{proof}

\begin{teo}\label{t4}
Let a net of linear selfadjoint maps $A_\al: L^1+\mc M\to L^1+\mc M$, $\al\in\Lb$, be such that
\begin{enumerate}
\item $A_\al(L^1)\su L^1$ for every $\al\in\Lb$, and the net $\{A_\al(x)\}_{\al\in\Lb}$ is a.u. (b.a.u.) Cauchy for each $x\in L^1$;
\item $A_\al(\mc M)\su\mc M$ for every $\al\in\Lb$, and there exists $c_\ii>0$ such that 
\[
\sup_{\al\in\Lb}\|A_\al(x)\|_\ii\leq c_\ii\|x\|_\ii\ \ \forall \ x\in\mc M.
\] 
\end{enumerate}
Then $A_\al(x)\to\wh x\in\mc R_\tau$ a.u. (respectively, b.a.u.) for every $x\in\mc R_\tau$. If, in addition, $\|A_\al(x)\|_1\leq c_1\|x\|_1$ for some $c_1>0$ and all $x\in L^1$, then, given a fully symmetric space $E\su\mc R_\tau$, we have $A_\al(E)\su E$ for every $\al\in \Lb$ and $A_\al(x)\to\widetilde x\in E$ a.u. (respectively, b.a.u.) for every $x\in E$.
\end{teo}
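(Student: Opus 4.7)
The plan is to prove the first assertion by decomposing elements of $\mc R_\tau$ via Proposition \ref{p1} and applying the $L^1$ a.u. (b.a.u.) Cauchy hypothesis, and then to handle the refinement to a fully symmetric subspace by invoking Proposition \ref{p3} together with a Fatou-type lower semicontinuity of the generalized singular numbers under convergence in measure. I expect this last step to be the main obstacle.

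For the extension to $\mc R_\tau$, fix $x\in\mc R_\tau$. Writing $x=x_1+ix_2$ with $x_i^*=x_i\in\mc R_\tau$ (which is a $\ast$-algebra) and using linearity and selfadjointness of the $A_\al$ reduces to the selfadjoint case; Corollary \ref{c1} then ensures $A_\al(x)\in\mc R_\tau$ for every $\al$. To show that $\{A_\al(x)\}$ is a.u. Cauchy, fix $\ep>0$ and, for each $n\in\mbb N$, use Proposition \ref{p1} to split $x=y_n+z_n$ with $y_n\in L^1$ and $\|z_n\|_\ii\leq 1/n$. Hypothesis (1) applied to $y_n$ produces $e_n\in\mc P(\mc M)$ with $\tau(e_n^\pp)\leq\ep\,2^{-n}$ such that $\|(A_\al(y_n)-A_\bt(y_n))e_n\|_\ii\to 0$. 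Setting $e=\bigwedge_{n\ge 1}e_n$ yields $\tau(e^\pp)\leq\ep$, and since $e\le e_n$ and hypothesis (2) bounds $\|A_\gm(z_n)\|_\ii$ by $c_\ii/n$ for every $\gm$, the triangle inequality gives
\[
\|(A_\al(x)-A_\bt(x))e\|_\ii\leq\|(A_\al(y_n)-A_\bt(y_n))e_n\|_\ii+2c_\ii/n
\]
for every $n\in\mbb N$ and every $\al,\bt\in\Lb$. Letting $\al,\bt$ run large for fixed $n$ and then $n\to\ii$ delivers the a.u. Cauchy property; the b.a.u. case is identical after sandwiching by $e$ on both sides. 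Theorem \ref{t3} then produces $A_\al(x)\to\wh x\in\mc R_\tau$.

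For the fully symmetric refinement, set $c=\max(c_1,c_\ii)$. The additional $L^1$ bound together with hypothesis (2) makes $c^{-1}A_\al$ a Dunford--Schwartz operator for every $\al$, so Proposition \ref{p3} gives $A_\al(x)\prec\prec c\cdot x$ for every $x\in L^1+\mc M$. When $x\in E$, full symmetry of $E$ yields $A_\al(x)\in E$ with $\|A_\al(x)\|_E\leq c\|x\|_E$; in particular $A_\al(E)\su E$, and the first part already supplies the a.u. (b.a.u.) limit $\wh x\in\mc R_\tau$.

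The principal remaining step is to show $\wh x\in E$, and this is where I expect the main difficulty. For this I would use the standard lower semicontinuity $\mu_t(\wh x)\leq\liminf_\al\mu_t(A_\al(x))$ valid whenever $A_\al(x)\to\wh x$ in measure (as is the case here, since a.u. or b.a.u. convergence implies convergence in measure), a consequence of the results in \cite{fk}. Combining this with Fatou's lemma on $(0,s)$,
\[
\int_0^s\mu_t(\wh x)\,dt\leq\liminf_\al\int_0^s\mu_t(A_\al(x))\,dt\leq c\int_0^s\mu_t(x)\,dt\quad\text{for every }s>0,
\]
so $\wh x\prec\prec c\cdot x$. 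Full symmetry of $E$ then yields $\wh x\in E$ with $\|\wh x\|_E\leq c\|x\|_E$, completing the plan.
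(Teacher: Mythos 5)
Your argument follows the paper's proof essentially line by line: the same reduction to $x^*=x$, the same Proposition \ref{p1} decompositions $x=y_n+z_n$ with $\|z_n\|_\ii\leq 1/n$, the same projections $e_n$ with $\tau(e_n^\pp)\leq\ep 2^{-n}$ and $e=\bigwedge_n e_n$, the same appeal to Corollary \ref{c1} and Theorem \ref{t3} for the limit in $\mc R_\tau$, and the same normalization $c^{-1}A_\al\in DS$ combined with Proposition \ref{p3} for the fully symmetric part. All of that is sound.

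The one genuine gap is in your final step, where you pass from the pointwise bound $\mu_t(\wh x)\leq\liminf_\al\mu_t(A_\al(x))$ to $\int_0^s\mu_t(\wh x)\,dt\leq\liminf_\al\int_0^s\mu_t(A_\al(x))\,dt$ ``by Fatou's lemma.'' Fatou's lemma (like monotone convergence) fails for nets of functions: take $\Lb$ to be the finite subsets of $(0,s)$ ordered by inclusion and $f_F=\chi_F$; then $\liminf_F f_F\equiv 1$ on $(0,s)$ while $\int_0^s f_F\,dt=0$ for every $F$, so the integral of the lower limit is not controlled by the lower limit of the integrals. Hence the interchange of $\liminf_\al$ and $\int_0^s$ cannot be justified for the net as written. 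The paper sidesteps this by first extracting from the net a \emph{sequence} $\wt A_n(x)\to\wh x$ a.u.\ (possible because the index set is directed, exactly as in the proof of Theorem \ref{t3}), then invoking \cite[Lemma 3.4]{fk} to get $\mu_t(\wt A_n(x))\to\mu_t(\wh x)$ almost everywhere, and only then applying the classical sequential Fatou lemma together with $\wt A_n(x)\prec\prec x$ to conclude $\wh x\prec\prec x$ and $\wh x\in E$. Inserting this sequence-extraction step repairs your argument; nothing else needs to change.
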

\begin{proof}
We will prove the statement for a.u. convergence. Proof for b.a.u. convergence is similar. Assume, without loss of generality, that $x^*=x$ and fix $\ep>0$. By Proposition \ref{p1}, given $m\in\mathbb N$, there exist $y_m\in L^1$ and $z_m\in\mc M$ such that $\displaystyle\|z_m\|_\ii\leq\frac1m$ and $x=y_m+z_m$. Since the net $\{A_\al(y_m)\}_{\al\in\Lb}$ is a.u. Cauchy, there exist $e_m\in\mc P(\mc M)$ and $\gm_m\in I$ such that 
\[
\tau(e_m^\pp)\leq\frac\ep{2^m}\text{ \ \ and \ \ }\|(A_\al(y_m)-A_\bt(y_m))e_m\|_\ii\leq\frac1m\, \ \ \forall \ \al,\bt\ge\gm_m.
\]
Thus, letting $e=\bigwedge_me_m$, we obtain
\[
\tau(e^\pp)\leq\ep\text{ \ \ and \ \ }\|(A_\al(y_m)-A_\bt(y_m))e\|_\ii\leq\frac1m\,\ \ \forall \ \al,\bt\ge\gm_m.
\]
Therefore, given $\al,\bt\ge\gm_m$, it follows that
\[
\begin{split}
\|(A_\al(x)-A_\bt(x))e\|_\ii&=\|(A_\al(y_m)-A_\bt(y_m))e\|_\ii\\
&+\|(A_\al(z_m)-A_\bt(z_m))e\|_\ii\leq\frac{1+2c_\ii}m,
\end{split}
\]
so the net $\{A_\al(x)\}_{\al\in\Lb}$ is a.u. Cauchy. Since, by Corollary \ref{c1}, $\{A_\al(x)\}_{\al\in I}\su\mc R_\tau$, and, by Theorem \ref{t3}, the space $\mc R_\tau$ is a.u. complete, we conclude that there exists $\wh x\in\mc R_\tau$ such that 
$A_\al(x)\to\wh x\in\mc R_\tau$ a.u.

Let now $E\su\mc R_\tau$ be a fully symmetric space, and assume, in addition, that $\|A_\al(x)\|_1\leq c_1\|x\|_1$ for some $c_1>0$ and all $x\in L^1$. Put $c=\max\{c_\ii,c_1\}$ and denote $\wt A_\al=c^{-1}A_\al(x)$, $\al\in\Lb$. As $\{\wt A_\al\}_{\al\in I}\su DS$, we conclude, in view of Proposition \ref{p3}, that $\wt A_\al(x)\prec\prec x$ for all $x\in L^1+\mc M$, hence $\wt A_\al(E)\su E$, and so $A_\al(E)\su E$ for all $\al\in\Lb$. 

Next, let $x\in E$. We have $\wt A_\al(x)\to\wh x\in\mc R_\tau$ a.u. To simplify the argument, choose, as in the proof of Theorem \ref{t3}, a sequence $\{\wt A_n(x)\}\su\{\wt A_\al(x)\}$ with $\wt A_n(x)\to\wh x$ a.u. Then it is clear that $\wt A_n(x)\to\wh x$ in measure, which, by \cite[Lemma 3.4]{fk}, implies that $\mu_t(\wt A_n(x))\to\mu_t(\wh x)$ almost everywhere on $((0,\ii),\nu)$. Further, as $\wt A_n(x)\prec\prec x$, we have
\[
\int_0^s\mu_t(\wt A_n(x))dt\leq\int_0^s\mu_t(x)dt\ \ \ \forall \ s>0,\, n\in\mbb N.
\]
Therefore, as $\mu_t(\wt A_n(x))\to\mu_t(\wh x)$ almost everywhere on $((0,s),\nu)$, it follows from Fatou's lemma that
\[
\int_0^s\mu_t(\wh x)dt\leq\int_0^s\mu_t(x)dt\ \ \ \forall \ s>0,
\]
that is\, $\wh x\prec\prec x$. Since $E$ is a fully symmetric space, we conclude that $\wh x\in E$, hence $A_\al(x)\to c\,\wh x\in E$ a.u., completing the argument.
\end{proof}

\section{Applications}
For the sake of convenience, let us state the following immediate corollary of Theorem \ref{t4}.

\begin{pro}\label{c2}
Let $A_\al: L^1+\mc M\to L^1+\mc M$, $\al\in\Lb$, be a net of linear selfadjoint maps such that
\begin{enumerate}
\item there exist $c_1>0$ and $c_\ii>0$ satisfying conditions 
\[
\|A_\al(x)\|_1\leq c_1\|x\|_1\ \ \forall\ x\in L^1\text{\, \ and\, \ }\|A_\al(x)\|_\ii\leq c_\ii\|x\|_\ii\ \ \forall\ x\in\mc M;
\]
\item the net $\{A_\al(x)\}_{\al\in\Lb}$ is a.u. (b.a.u.) Cauchy for each $x\in L^1$.
\end{enumerate}
Then, given a fully symmetric space $E\su L^0$ with $\mathbf 1\notin E$ and $x\in E$, there exists $\wh x\in E$ such that 
$A_\al(x)\to\wh x$ a.u. (respectively, b.a.u.).
\end{pro}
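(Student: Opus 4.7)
The plan is to recognize this as an immediate application of Theorem \ref{t4}, once the hypothesis $\mathbf 1\notin E$ is translated, via \cite[Proposition 2.2]{cl1} (recalled in the opening paragraph of Section 3), into the containment $E\su\mc R_\tau$. This containment is exactly what the ``in addition'' clause of Theorem \ref{t4} requires of the ambient fully symmetric space, so once it is in hand the conclusion of that theorem will apply to every $x\in E$ and will produce a limit lying in $E$ itself.

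Next I would match up the two sets of hypotheses. The bound $\|A_\al(x)\|_1\leq c_1\|x\|_1$ in condition (1) of the corollary implicitly forces $A_\al(L^1)\su L^1$ (a finite $L^1$-norm in $L^1+\mc M$ entails membership in $L^1$), which combined with the a.u.\ (b.a.u.) Cauchy assumption (2) recovers part (1) of Theorem \ref{t4}. Similarly, the bound $\|A_\al(x)\|_\ii\leq c_\ii\|x\|_\ii$ in condition (1) gives $A_\al(\mc M)\su\mc M$ together with the uniform constant $c_\ii$ required by part (2) of Theorem \ref{t4}, while the same $L^1$ bound supplies the ``in addition'' hypothesis of that theorem with the constant $c_1$. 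Invoking Theorem \ref{t4} then yields $A_\al(E)\su E$ for every $\al\in\Lb$ and, for each $x\in E$, the existence of $\wh x\in E$ with $A_\al(x)\to\wh x$ a.u.\ (respectively, b.a.u.), which is precisely the claim.

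There is essentially no obstacle here, as Proposition \ref{c2} is genuinely an immediate corollary. The only point worth flagging is that the intrinsic criterion $\mathbf 1\notin E$ serves as a convenient substitute for the more external requirement $E\su\mc R_\tau$ appearing in the statement of Theorem \ref{t4}, which is what makes this reformulation useful for the ergodic-theoretic applications outlined in the remainder of Section 4.
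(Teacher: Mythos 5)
Your proposal is correct and follows exactly the route the paper intends: the paper states Proposition \ref{c2} as an immediate corollary of Theorem \ref{t4}, with the hypothesis $\mathbf 1\notin E$ converted to $E\su\mc R_\tau$ via \cite[Proposition 2.2]{cl1} as recalled at the start of Section 3. Your matching of the hypotheses (the norm bounds giving the invariance of $L^1$ and $\mc M$ plus the uniform constants, and the Cauchy condition giving part (1) of Theorem \ref{t4}) is precisely what is needed.
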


We will now present some applications of Proposition \ref{c2}. 
\vskip5pt
Given $d\in\mathbb N$, denote 
\[
\mbb N^d_0=\{\, {\bf n}=(n_1,\dots,n_d):\, n_1,\dots,n_d\in\mathbb N\cup{0}\}.
\] 
Let $\big\{{\bf n}_\al=(n_1(\al),\dots,n_d(\al))\big\}_{\al\in\Lb}$ be a net in $\mbb N_0^d$. We write\, ${\bf n}_\al\to\ii$\, if\, $n_i(\al)\to\ii$ for each $1\leq i\leq d$. Denote $|{\bf n}_\al|^\pr=n_1(\al)\cdot\ldots\cdot n_d(\al)$, and let $|{\bf n}_\al|$ stand for $|{\bf n}_\al|^\pr$, with $n_i(\al)$ replaced by $1$ whenever $n_i(\al)=0$.

\vskip5pt
Assume that $\{T_1,\dots,T_d\}\su DS^+$ and let $x\in L^1+\mc M$.  If $\al\in\Lb$, set
\begin{equation}\label{e2}
A_\al(x)=\frac1{|{\bf n}_\al|}\sum_{k_1\leq n_1(\al),\dots,k_d\leq n_d(\al)}T_1^{k_1}\cdot\ldots\cdot T_d^{k_d}(x).
\end{equation}
Then $\{A_\al\}_{\al\in\Lb}$ is a net of linear selfadjoint (positive) maps in $L^1+\mc M$ that naturally satisfies condition (1) of Proposition \ref{c2}. To ensure that this net also satisfies condition (2) of Proposition \ref{c2}, we need an additional assumption that the net ${\bf n}_\al\to\ii$ remains in a sector of $\mbb N_0^d$ (see \cite[Ch.\,6, \S\,6.2, 6.3]{kr} for the sequential, commutative case):

\begin{df}
A net ${\bf n}_\al\to\ii$ is said to {\it remain in a sector} of $\mbb N_0^d$ if there exists $0<c_0<\ii$ such that 
\[
\displaystyle\frac{n_i(\al)}{n_j(\al)}\leq c_0\ \ \ \forall \ 1\leq i,j\leq d\text{\,\ and\,\ }\al\in\Lb,
\]
assuming that $n_j(\al)\neq 0$.
\end{df}
Now, straightforwardly adjusting to the net setting the argument in \cite{cl1} that yields \cite[Theorem 5.3]{cl1}, we derive from Yeadon's weak-type (1,1) maximal inequality \cite[Theorem 1]{ye} a weak-type $(1,1)$ maximal inequality for the net $\{A_\al\}_{\al\in\Lb}$ in (\ref{e2}) and use it to show that it is b.a.u. Cauchy for every $x\in L^1$. Thus, as a corollary of Proposition \ref{c2}, we arrive at the following (cf. \cite[Theorem 5.6]{cl1}).

\begin{teo}\label{t6}
Let $\{T_1,\dots,T_d\}\su DS^+$ be a commuting family, and let $E\su\mc R_\tau$ be a fully symmetric space, for example, any space $L^p$, $1\leq p<\ii$. Then, given $x\in E$, the net $\{A_\al(x)\}_{\al\in\Lb}$ given by (\ref{e2}) converges b.a.u. to some $\wh x\in E$, provided the net ${\bf n}_\al\to\ii$ remains in a sector of $\mbb N_0^d$.
\end{teo}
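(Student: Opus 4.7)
The strategy is to apply Proposition \ref{c2}, so the work reduces to verifying its two hypotheses for the averages $\{A_\al\}_{\al\in\Lb}$. Hypothesis (1) is immediate: each $T_i\in DS^+$ is a contraction on both $L^1$ and $\mc M$, and convex combinations preserve this, so one may take $c_1=c_\ii=1$; the positivity of each $T_i$ guarantees that $A_\al$ preserves selfadjointness in the sense introduced above.

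The substantive point is hypothesis (2), namely that $\{A_\al(x)\}_{\al\in\Lb}$ is b.a.u.\ Cauchy whenever $x\in L^1$. My plan is to transfer the sequential argument of \cite[Theorem 5.3]{cl1} to the net setting in two steps. First, I would establish a bilateral weak-type $(1,1)$ maximal inequality for the family $\{A_\al\}_{\al\in\Lb}$: there exists a constant $C=C(d,c_0)$ such that for any $x\in L^1_+$ and $\lb>0$ one can find $e\in\mc P(\mc M)$ with
\[
\tau(e^\pp)\leq C\,\frac{\|x\|_1}\lb\text{ \ \ and \ \ }\sup_{\al\in\Lb}\|eA_\al(x)e\|_\ii\leq\lb.
\]
For $d=1$ this is Yeadon's inequality \cite[Theorem 1]{ye}; for $d\ge 2$ the sector condition lets one dominate the rectangular average by a cubical average of comparable scale, losing only a factor $c_0^{d-1}$, to which an iterated application of Yeadon's inequality along each axis applies. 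The passage from the sequential statement to the net statement is a cofinality reduction: the sector condition forces all coordinates of ${\bf n}_\al$ to grow together, so the supremum over $\Lb$ is realized on a countable cofinal sub-family without additional cost.

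Second, I would combine this maximal inequality with b.a.u.\ convergence on a dense subspace via a standard three-$\ep$ argument. The set $L^1\cap\mc M$ is $\|\cdot\|_1$-dense in $L^1$; for $y\in L^1\cap\mc M$, the multiparameter mean ergodic theorem (applied one variable at a time) gives $L^2$-convergence of $A_\al(y)$, and the $\mc M$-contractivity together with the sector condition upgrades this to b.a.u.\ Cauchyness. Given $x\in L^1$, approximating $x$ by $y\in L^1\cap\mc M$ in $L^1$-norm and writing
\[
A_\al(x)-A_\bt(x)=[A_\al(y)-A_\bt(y)]+[A_\al(x-y)-A_\bt(x-y)]
\]
reduces b.a.u.\ Cauchyness of $\{A_\al(x)\}$ to the just-established maximal bound applied to $x-y$. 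With hypothesis (2) in hand, Proposition \ref{c2} delivers the conclusion.

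The main obstacle is the multi-parameter maximal inequality in the net setting: the sector condition is precisely what prevents the iterative use of Yeadon's one-parameter inequality from blowing up, and verifying that the passage from a sequence to a general net in a sector adds no quantitative loss is where the adjustment from \cite{cl1} requires care. Once the inequality is established, the remainder is a routine combination of the tools developed in Sections 2 and 3.
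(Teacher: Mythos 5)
Your proposal takes essentially the same route as the paper, which itself only sketches this step: verify the hypotheses of Proposition \ref{c2} by adapting to nets remaining in a sector the weak-type $(1,1)$ maximal inequality of \cite[Theorem 5.3]{cl1} (built on Yeadon's inequality \cite[Theorem 1]{ye}) and combining it with convergence on a dense subspace of $L^1$ to obtain the b.a.u.\ Cauchy property. One imprecision worth fixing: $L^2$-convergence of $A_\al(y)$ for $y\in L^1\cap\mc M$ does not by itself upgrade to b.a.u.\ Cauchyness via $\mc M$-contractivity; the dense set on which one actually gets uniform (hence b.a.u.) convergence is spanned by invariant elements and coboundaries $z-T_iz$, which is the standard mechanism underlying the argument of \cite{cl1}.
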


Theorem \ref{t6} for $d=1$ yields the following version of \cite[Theorem 2]{ye} (recall that the class of contractions $\alpha$ considered in \cite{ye} coincides with the class $DS^+$ modulo a unique extension - see \cite[Proposition 1.1]{cl}).

\begin{teo}
Let $T\in DS^+$, and let $\{n_\al\}_{\al\in\Lb}\su\mbb N$ be a net such that $n_\al\to\ii$. Then, given $x\in L^1$,  the net 
\[
A_\al(x)=\frac1{n_\al}\sum_{k=0}^{n_\al-1}T^k(x),  \ \al\in\Lb,
\]
converges b.a.u. to some $\wh x\in L^1$.
\end{teo}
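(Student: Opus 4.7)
The plan is to deduce this statement directly from Theorem \ref{t6} specialized to the case $d=1$. Setting $d=1$, $T_1 = T$, ${\bf n}_\al = n_\al \in \mbb N_0$, and $E = L^1$, the multi-parameter averages in (\ref{e2}) collapse to the one-dimensional ergodic averages $\frac{1}{n_\al}\sum_{k=0}^{n_\al-1} T^k(x)$ that appear in the statement, so the conclusion of Theorem \ref{t6} specializes to precisely the b.a.u. convergence we are asked to prove.

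First I would verify that the hypotheses of Theorem \ref{t6} are satisfied. The space $L^1 = L^1(\mc M,\tau)$ is fully symmetric and contained in $\mc R_\tau$, as recalled at the beginning of Section 3. The sector condition on $\{n_\al\}$ is automatic when $d=1$: with a single index the ratio $n_i(\al)/n_j(\al)$ equals $1$, so any constant $c_0 \ge 1$ works. One should also note that $T \in DS^+$ yields selfadjoint linear maps in the sense required by Theorem \ref{t4}: a positive map preserves the selfadjoint cone (decompose $x = x_+ - x_-$ and use linearity together with positivity), hence each iterate $T^k$, and therefore each average $A_\al$, sends selfadjoint elements to selfadjoint elements.

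With these observations in place, Theorem \ref{t6} applied with $E = L^1$ produces $\wh x \in L^1$ such that $A_\al(x) \to \wh x$ b.a.u., which is the desired conclusion.

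There is no substantive obstacle in this final step: the real work has already been invested upstream, namely in establishing completeness of $\mc R_\tau$ under b.a.u. convergence (Theorem \ref{t3}), in the $L^1 \to E \subset \mc R_\tau$ extension mechanism of Theorem \ref{t4} and Proposition \ref{c2}, and in the net-version adaptation of Yeadon's weak-type $(1,1)$ maximal inequality used to produce the b.a.u. Cauchy property of $\{A_\al(x)\}_{\al\in\Lb}$ on $L^1$ that feeds into Theorem \ref{t6}. At the level of this theorem, the only task is to recognize that its hypotheses are the $d=1$ specialization of a framework already established.
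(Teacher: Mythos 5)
Your proposal is correct and is exactly the paper's argument: the author derives this theorem by specializing Theorem \ref{t6} to $d=1$ with $E=L^1$, where the sector condition and the commutativity hypothesis are vacuous. Your additional checks (that $L^1\su\mc R_\tau$ is fully symmetric and that $T\in DS^+$ gives selfadjoint maps) are sound and merely make explicit what the paper leaves implicit.
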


Next, let $\mbb C_1= \{z\in \mbb C: |z|=1\}$. A function $p:\mbb R_+\to \mbb C$ is called a {\it trigonomertic polynomial} if $p(t)=\sum\limits_{j=1}^nw_j\lb_j^t$, where $n\in \mbb N$, $\{w_j\}_1^n\su\mbb C$, and $\{\lb_j\}_1^n\su \mbb C_1$. A Lebesgue measurable function $\bt: \mbb R_+\to\mbb C$ will be called a {\it bounded Besicovitch function} if  $\|\bt\|_\ii<\ii$, and for every $\ep>0$ there is a trigonometric polynomial $p_\ep$ such that
\[
\limsup_{t\to\ii}\frac 1t\int_0^t|\bt(s)-p_\ep(s)|ds<\ep.
\]

Let a semigroup $\{T_t\}_{t\ge 0}\su DS$ be {\it strongly continuous in} $L^1$, that is,
\[
\|T_t(x)-T_{t_0}(x)\|_1\to 0 \text{ \ \ whenever\ \ } t\to t_0 \text{ \ \ for all\ \ } x\in L^1.
\]
Then, as it is shown in \cite{cl3}, given a Lebesgue measurable function $\bt: \mbb R_+\to\mbb C$ with $\|\bt\|_\ii<\ii$, for any $x\in L^1$ and $t>0$ there exists the Bochner integral
\[
B_t(x)=\frac 1t\int_0^t\bt(s)T_s(x)ds \in L^1.
\]

Note that if\, $\|\beta\|_\ii\neq 0$, then for each $t>0$ the operator
\[
\|\beta\|_\ii^{-1}B_t:\, L^1\to L^1
\]
can be uniquely extended to a Dunford-Schwartz operator - see \cite{cl}. 

\vskip5pt
The following theorem was proved in \cite{cl3}. Here we present an alternative, straightforward, argument.
\begin{teo}\label{t5}
Let $\{T_t\}_{t\ge 0}\su DS^+$ be a strongly continuous semigroup in $L^1$, and let $\bt(t)$ be a bounded Besicovitch function. If $\mathbf 1\notin E\su L^0$ is a fully symmetric space, then, given $x\in E$, the flow $\{B_t(x)\}_{t>0}$ converges b.a.u. to some $\wh x\in E$ as $t\to\ii$. In particular, $B_t(x)\to\wh x\in L^p$ b.a.u. for every $x\in L^p$, $1\leq p<\ii$.
\end{teo}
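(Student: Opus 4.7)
The plan is to deduce the theorem from Proposition \ref{c2} applied to the net $\{B_t\}_{t>0}$, taking the $L^1$ b.a.u.\ version (already proved in \cite{cl3}) as input.

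Because $\bt$ may be complex-valued, $B_t$ is not selfadjoint in the sense of the excerpt, so I would first decompose $\bt=\bt_1+i\bt_2$ into real and imaginary parts and work with each separately. Each $\bt_j$ is again a bounded Besicovitch function, since $|\bt_j|\leq|\bt|$ and the $L^1$-approximation of $\bt$ by a trigonometric polynomial $p_\ep$ controls the corresponding approximation of its real and imaginary parts by $\operatorname{Re}p_\ep$ and $\operatorname{Im}p_\ep$. This gives a decomposition $B_t=B_t^{(1)}+iB_t^{(2)}$, where $B_t^{(j)}(x)=\frac1t\int_0^t\bt_j(s)T_s(x)\,ds$. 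Each $B_t^{(j)}$ is manifestly linear; for $x^*=x$, the positivity of $T_s\in DS^+$ forces $T_s(x)^*=T_s(x)$, and the real-valuedness of $\bt_j$ then gives $B_t^{(j)}(x)^*=B_t^{(j)}(x)$, so $B_t^{(j)}$ is selfadjoint. The bounds $\|B_t^{(j)}(x)\|_1\leq\|\bt_j\|_\ii\|x\|_1$ and $\|B_t^{(j)}(x)\|_\ii\leq\|\bt_j\|_\ii\|x\|_\ii$ follow from the fact, recorded right before the theorem in the excerpt, that $\|\bt_j\|_\ii^{-1}B_t^{(j)}$ extends to a Dunford-Schwartz operator; this verifies condition (1) of Proposition \ref{c2} for each $B_t^{(j)}$.

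For condition (2), the $L^1$ b.a.u.\ convergence from \cite{cl3}, applied to the real and imaginary parts of $\bt$ separately, shows that $\{B_t^{(j)}(x)\}_{t>0}$ is b.a.u.\ convergent---hence, in particular, b.a.u.\ Cauchy---in $L^1$ for every $x\in L^1$. Proposition \ref{c2} then produces, for every $x\in E$, elements $\wh x_j\in E$ with $B_t^{(j)}(x)\to\wh x_j$ b.a.u., and $\wh x=\wh x_1+i\wh x_2\in E$ is the desired limit of $B_t(x)$. The $L^p$ statement is immediate since each $L^p$ is fully symmetric and $\mathbf 1\notin L^p$ for $1\leq p<\ii$ under the standing assumption $\tau(\mathbf 1)=\ii$. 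The main conceptual step is recognizing that Proposition \ref{c2} is the natural vehicle; once this is in place, the remaining work reduces to the real/imaginary decomposition, and I do not anticipate a genuine obstacle.
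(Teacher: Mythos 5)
Your proposal is correct and follows essentially the same route as the paper: the paper's proof is exactly the observation that $\{B_t\}_{t>0}$ satisfies condition (1) of Proposition \ref{c2} and that the proof of \cite[Theorem 3.1]{cl3} gives the b.a.u.\ Cauchy property in $L^1$, i.e., condition (2). Your extra step of splitting $\bt$ into real and imaginary parts to secure the selfadjointness hypothesis of Proposition \ref{c2} is a legitimate (and slightly more careful) handling of a point the paper passes over in silence.
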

\begin{proof}
It is clear that the net $\{B_t\}_{t>0}$ satisfies condition (1) of Proposition \ref{c2}. As the proof of \cite[Theorem 3.1]{cl3} shows that $\{B_t(x)\}_{t>0}$ is b.a.u. Cauchy for all $x\in L^1$, that is, condition (2) of Proposition \ref{c2} is also satisfied, the assertion follows.
\end{proof}

\begin{rem}
One can verify, using the approach in \cite{jx}, or in \cite{li, cl1}, that for $p\ge2$ the convergence in Theorems \ref{t6} and \ref{t5} is a.u.
\end{rem}

Finally, we notice that the results of this article can be applied to simplify and streamline some arguments in the noncommutative ergodic theory; for example, some presented in \cite{cl2, cl1}.

\vskip 5pt
\noindent
{\bf Acknowledgment}. The author would like to express his gratitude to Dr. Vladimir Chilin for his comments that helped to improve the article's presentation. The author is partially supported by the Pennsylvania State University traveling program.


\begin{thebibliography}{99}
\bibitem{br} O. Bratelli, N. D. Robinson, {\it Operator Algebras and Quantum Statistical Mechanics}, Springer-Verlag Berlin - Heidelberg - New York, 1979.

\bibitem{cl} V. Chilin, S. Litvinov, Ergodic theorems in fully symmetric spaces of $\tau$-measurable operators, {\it Studia Math.} {\bf 288}(2) (2015), 177-195.

\bibitem{cl4} V. Chilin, S. Litvinov, The validity space of Dunford–Schwartz pointwise ergodic theorem, {\it J. Math. Anal. Appl.} {\bf 461} (2018), 234–247.

\bibitem{cl2} V. Chilin, S. Litvinov, Local ergodic theorems in symmetric spaces of measurable operators, {\it Integr. Equ. Oper. Theory} (2019) 91:15.

\bibitem{cl3} V. Chilin, S. Litvinov, Noncommutative weighted individual ergodic theorems with continuous time, {\it Inf. Dim. Anal. Quantum Prob. Rel. Topics} {\bf 23} (2) (2020).

\bibitem{cl1} V. Chilin, S. Litvinov, On individual ergodic theorems for semifinite von Neumann algebras, {\it J. Math. Anal. Appl.} {\bf 495} (2021).

\bibitem{cls} V. Chilin, S. Litvinov, A. Skalski,  A few remarks in noncommutative ergodic theory, {\it J. Oper. Theory} {\bf 53}(2) (2005), 331-350.

\bibitem{ddp} P. G. Dodds, T. K. Dodds, and B. Pagter, Noncommutative K\"othe duality, {\it Trans. Amer. Math. Soc.} {\bf 339}(2) (1993), 717-750.


\bibitem{es} G. A. Edgar,  L. Sucheston, {\it Stopping Times and Directed Processes}, Cambridge University Press, 1992.

\bibitem{fk} T. Fack, H. Kosaki, Generalized $s$-numbers of $\tau$-measurable operators, {\it Pacific. J. Math.} {\bf 123} (1986), 269-300.

\bibitem{jx} M. Junge, Q. Xu, Noncommutative maximal ergodic theorems, {\it J. Amer. Math. Soc.} {\bf 20}(2) (2007), 385-439.

\bibitem{kps} S. G. Krein, Ju. I. Petunin, and E. M. Semenov, {\it Interpolation of Linear Operators}, Translations of Mathematical Monographs, Amer. Math. Soc., {\bf 54}, 1982.

\bibitem {kr} U. Krengel, {\it Ergodic Theorems}, Walter de Gruyer, Berlin-New York, 1985.

\bibitem{li} S. Litvinov, Uniform equicontinuity of sequences of measurable operators and non-commutative ergodic theorems, {\it Proc. Amer. Math. Soc.} {\bf 140}(7) (2012), 2401-2409.

\bibitem{ne} E. Nelson, Notes on non-commutative integration, {\it J. Funct. Anal.} {\bf 15} (1974), 103-116.

\bibitem{or} E. T. Ordman, Convergence almost everywhere is not topological, {\it Amer. Math. Monthly} {\bf 73} (2) (1966).

\bibitem{px} G. Pisier, Q. Xu, {\it Noncommutative $L^p$-spaces}, in: Handbook of the Geometry of Banach Spaces, {\bf 2}, 2003, 1459-1517.

\bibitem{ra} C. Radin, {\it Automorphisms of von Neumann algebras as point transformations}, Proc. Amer. Math. Soc. {\bf 57} (1973), 343-346.

\bibitem{se} I. E. Segal, A non-commutative extension of abstract integration, {\it Ann. of Math.} {\bf 57} (1953), 401-457.

\bibitem{sz}  S. Str\u atil\u a, L. Zsid\' o, {\it Lectures on von Neumann Algebras}, Editura Academiei Rom\^ ane, Abacus Press, Bucure\c sti, 1979.

\bibitem{ta} M. Takesaki, {\it Theory of Operator Algebras I, II, III}, Springer-Verlag New York, 1979.

\bibitem{ye0} F. J. Yeadon, Non-commutative $L^p$-spaces, {\it Math. Proc. Camb. Phil. Soc.} {\bf 77} (1975), 91-102.

\bibitem{ye} F. J. Yeadon, Ergodic theorems for semifinite von Neumann algebras. I, {\it J. London Math. Soc.} {\bf 16}(2) (1977), 326-332.


\end{thebibliography}
\end{document}